\newtheorem{theorem}{Theorem}
\theoremstyle{plain}
\newtheorem{corollary}{Corollary}
\newtheorem{lemma}{Lemma}
\newtheorem{proposition}{Proposition}
\newtheorem{remark}{Remark}
\numberwithin{equation}{section}
\begin{document}
\title{Interiors of continuous images of the middle-third Cantor set}

\begin{abstract}
Let $C$ be the middle-third Cantor set, and $f$ a continuous function
defined on an open set $U\subset \mathbb{R}^{2}$. Denote the image
\begin{equation*}
f_{U}(C,C)=\{f(x,y):(x,y)\in (C\times C)\cap U\}.
\end{equation*}%
If $\partial _{x}f$, $\partial _{y}f$ are continuous on $U,$ and there is a
point $(x_{0},y_{0})\in (C\times C)\cap U$ such that
\begin{equation*}
1<\left\vert \frac{\partial _{x}f|_{(x_{0},y_{0})}}{\partial
_{y}f|_{(x_{0},y_{0})}}\right\vert <3\text{ or }1<\left\vert \frac{\partial
_{y}f|_{(x_{0},y_{0})}}{\partial _{x}f|_{(x_{0},y_{0})}}\right\vert <3,
\end{equation*}%
then $f_{U}(C,C)$ has a non-empty interior. As a consequence, if
\begin{equation*}
f(x,y)=x^{\alpha }y^{\beta }(\alpha \beta \neq 0),\text{ }x^{\alpha }\pm
y^{\alpha }(\alpha \neq 0)\text{ or }\sin (x)\cos (y),
\end{equation*}%
then $f_{U}(C,C)$ contains a non-empty interior.
\end{abstract}

\author{kan Jiang}
\address{Department of Mathematics, Ningbo University, Ningbo 315211, P. R.
China}
\email{jiangkan@nbu.edu.cn; kanjiangbunnik@yahoo.com}
\author{lifeng Xi}
\address{Department of Mathematics, Ningbo University, Ningbo 315211, P. R.
China}
\email{xilifeng@nbu.edu.cn; xilifengningbo@yahoo.com}
\thanks{Lifeng Xi is the corresponding author. The work is supported by
National Natural Science Foundation of China (Nos. 11831007, 11771226,
11701302, 11371329, 11471124, 11671147). The work is also supported by K.C.
Wong Magna Fund in Ningbo University.}
\subjclass[2000]{Primary 28A80}
\keywords{Fractal, middle-third Cantor set, interior}
\maketitle

\section{Introduction}

The middle-third Cantor set, denoted by $C$, is an elegant set in set
theory. Usually, it is used to construct some counterexamples in analysis.
However, there are still many open problems for this set. For instance, the
multiplication on the Cantor sets \cite{Tyson, Shmerkin}, the sections of
the products of the Cantor sets \cite{FG}, and so forth.

One of the main motivations of this paper is due to a result of Hochman and
Shmerkin \cite{Hochman2012}: Let $K_{1}$ and $K_{2}$ be two self-similar
sets with IFS's $\{f_{i}(x)=r_{i}x+a_{i}\}_{i=1}^{n}$ and $%
\{g_{j}(x)=r_{j}^{\prime }x+b_{j}\}_{j=1}^{m}$ respectively, if there are
some $r_{i},r_{j}^{\prime }$ such that $\log |r_{i}|/\log |r_{j}^{\prime
}|\notin \mathbb{Q},$ then
\begin{equation*}
\dim _{H}(K_{1}+K_{2})=\min \{\dim _{H}(K_{1})+\dim _{H}(K_{2}),1\},
\end{equation*}%
where $K_{1}+K_{2}=\{x+y:x\in K_{1},y\in K_{2}\}$. The condition in the
above result is called the irrational condition. In \cite{Shmerkin},
Shmerkin stated
\begin{equation*}
\dim _{H}(K_{1}\cdot K_{2})=\min \{\dim _{H}(K_{1})+\dim _{H}(K_{2}),1\},
\end{equation*}
where $K_{1}\cdot K_{2}=\{xy:x\in K_{1},y\in K_{2}\}$. It is natural to
consider that without the irrational condition, how large $K_{1}\cdot K_{2}$
is in the sense of Hausdorff dimension or Lebesgue measure.

Let $f$ be a continuous function defined on an open set $U\subset \mathbb{R}%
^{2}$. Denote the image%
\begin{equation*}
f_{U}(C,C)=\{f(x,y):(x,y)\in (C\times C)\cap U\}.
\end{equation*}%
For convenience, we write $f(C,C)=f_{\mathbb{R}^{2}}(C,C).$ Steinhaus \cite%
{HS} proved that if $f(x,y)=x-y$, then $f(C,C)=[-1,1]$. As a result, $%
C+C=[0,2]$ since $C$ is symmetric at $1/2,$ i.e. $C=1-C.$ In \cite{Tyson},
Athreya, Reznick and Tyson considered the multiplication on $C$, and proved
that
\begin{equation*}
17/21\leq \mathcal{L}(C\cdot C)\leq 8/9,
\end{equation*}%
where $\mathcal{L}$ denotes the Lebesgue measure and $C\cdot C=\{xy:x,y\in
C\}.$ One can find more results in \cite{Hall, PS, XiKan1, XiKan2,SumKan}
for the arithmetic representations of real numbers.

Motivated by Athreya, Reznick and Tyson's result, it is natural to ask
whether $C\cdot C$ contains a non-empty interior. To the best of our
knowledge, we, up to now, cannot find an answer to this kind of question for
two general self-similar sets.

In fact, whether a fractal set contains a non-empty interior is a crucial
problem in fractal geometry and dynamical systems. This is the second reason
why we analyze the existence of the interior. Schief \cite{Schief}, Bandt
and Graf \cite{Bandt} showed the relation among the open set condition,
positive Hausdorff measure and non-empty interior. Dajani et al. \cite{DJK},
Hare and Sidorov \cite{Hare2017,Hare2016} found that the existence of the
non-empty interior of a class of self-affine sets is equivalent to the
existence of the simultaneous expansions. In dynamical system there is a
celebrated conjecture posed by Palis \cite{Palis}, i.e. whether it is true
(at least generically) that the arithmetic sum of dynamically defined Cantor
sets either has measure zero or contains a non-empty interior. This
conjecture was solved in \cite{Yoccoz}. However, for the general
self-similar sets this conjecture is still open.

In this paper, we prove the following result.

\begin{theorem}
\label{Main} If $\partial _{x}f$, $\partial _{y}f$ are continuous on $U,$
and there is a point $(x_{0},y_{0})\in (C\times C)\cap U$ such that
\begin{equation*}
1<\left\vert \frac{\partial _{x}f|_{(x_{0},y_{0})}}{\partial
_{y}f|_{(x_{0},y_{0})}}\right\vert <3\text{ or }1<\left\vert \frac{\partial
_{y}f|_{(x_{0},y_{0})}}{\partial _{x}f|_{(x_{0},y_{0})}}\right\vert <3.
\end{equation*}%
Then $f_{U}(C,C)$ contains a non-empty interior.
\end{theorem}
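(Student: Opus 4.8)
The plan is to reduce the two‑dimensional statement to a one‑dimensional statement about the Cantor set and then exploit the classical fact that $C+C=[0,2]$ (equivalently $C-C=[-1,1]$). Suppose, after possibly swapping the roles of $x$ and $y$, that $1<\left\vert \partial_x f|_{(x_0,y_0)}/\partial_y f|_{(x_0,y_0)}\right\vert<3$, and write $\lambda=\partial_x f|_{(x_0,y_0)}/\partial_y f|_{(x_0,y_0)}$. Replacing $f$ by $-f$ if necessary we may assume $\lambda>0$, so $\lambda\in(1,3)$. The key heuristic is that along the ``diagonal-like'' direction $(1,-\lambda)$ the function $f$ is, to first order, constant: $\partial_x f\cdot 1+\partial_y f\cdot(-1/\lambda)$ vanishes at $(x_0,y_0)$ (after rescaling), and along the transverse direction $f$ genuinely moves. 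So the image of $f$ restricted to $(C\times C)\cap U$ near $(x_0,y_0)$ should contain the image of a set of the form $\{f(x_0+s,\; y_0+t): (s,t)\in(C-x_0)\times(C-y_0),\ |s|,|t|\text{ small}\}$, and the sumset structure of $C$ in the right proportion makes this a full interval.

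First I would set up a self‑similar ``homogeneity'' reduction: because $C=\frac13 C\cup(\frac13 C+\frac23)$, for any $n$ the set $C$ contains affine copies $3^{-n}C+c$ of itself with $c$ ranging over the $2^n$ left endpoints of the level‑$n$ intervals, and moreover $(3^{-n}C+c)+(3^{-n}C+c')$ can be made to cover an interval of length $2\cdot 3^{-n}$ provided $c,c'$ are chosen in adjacent level‑$n$ intervals — this is exactly the mechanism behind $C+C=[0,2]$. Since $(x_0,y_0)\in C\times C$, for every $n$ we may pick level‑$n$ ``cells'' $I_n\ni x_0$, $J_n\ni y_0$ with $|I_n|=|J_n|=3^{-n}$, so that $(I_n\cap C)\times(J_n\cap C)\subset U$ for $n$ large. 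Then I would Taylor‑expand: on $I_n\times J_n$,
\begin{equation*}
f(x,y)=f(x_0,y_0)+\partial_x f|_{(x_0,y_0)}(x-x_0)+\partial_y f|_{(x_0,y_0)}(y-y_0)+o(3^{-n}),
\end{equation*}
uniformly, by continuity of the partials. Factoring out $\partial_y f|_{(x_0,y_0)}$ (nonzero, since the ratio is finite), the linear part becomes $\partial_y f|_{(x_0,y_0)}\big(\lambda(x-x_0)+(y-y_0)\big)$, and as $(x,y)$ runs over $(I_n\cap C)\times(J_n\cap C)$ the quantity $\lambda(x-x_0)+(y-y_0)$ runs over $\lambda\,((I_n-x_0)\cap(C-x_0)) + ((J_n-y_0)\cap(C-y_0))$, i.e. over $\lambda\cdot 3^{-n}(C-a_n)+3^{-n}(C-b_n)$ for suitable $a_n,b_n$ — up to the translation, this is $3^{-n}(\lambda C + C)$ intersected with a window.

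The crux is therefore the one‑dimensional claim: \emph{for $\lambda\in(1,3)$, the set $\lambda C+C$ contains an interval} (indeed contains a nondegenerate interval around any point of $\lambda C + C$, with uniform scale). I would prove this by the standard ``thickness'' / interval‑covering argument adapted to the ratio $\lambda$: partition according to the first ternary digits, so $\lambda C+C=\bigcup_{i,j\in\{0,1\}}\big(\tfrac{\lambda}{3}C+\lambda\cdot\tfrac{2i}{3}+\tfrac13 C+\tfrac{2j}{3}\big)$, and check that the four translates of the scaled copy $\tfrac{\lambda}{3}C+\tfrac13 C$ overlap consecutively — this works precisely because $1<\lambda<3$ guarantees that the gap created by removing a middle third (of relative size depending on $\lambda$) is bridged by the next piece; the endpoints match up so that the union of the four images is a single interval, and then self‑similarity propagates this to all scales, giving that $\lambda C+C$ is a finite union of intervals (in fact an interval). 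Feeding this back, $f$ on $(I_n\cap C)\times(J_n\cap C)$ has image containing $f(x_0,y_0)+\partial_y f|_{(x_0,y_0)}\cdot 3^{-n}\cdot(\text{interval of fixed length }\ell) + o(3^{-n})$; for $n$ large the $o(3^{-n})$ error is dominated, so the image contains an honest interval of length comparable to $3^{-n}$, hence $f_U(C,C)$ has nonempty interior.

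I expect the main obstacle to be the one‑dimensional lemma that $\lambda C+C$ contains an interval for all $\lambda\in(1,3)$: while $C+C=[0,2]$ is classical, the asymmetric case needs care because for $\lambda$ near $3$ the scaled copy $\tfrac{\lambda}{3}C$ has diameter approaching $1$ and one must verify the consecutive translates still overlap rather than leave a gap, and for $\lambda$ near $1$ one recovers essentially the symmetric case but must still track which pairs of first‑digit branches to use. A clean way to handle this uniformly is to show that the ``lower thickness'' condition of Newhouse‑type is met, or more elementarily to exhibit explicitly, for each $\lambda\in(1,3)$, a finite set of digit‑pairs whose corresponding pieces tile an interval — a short interval‑arithmetic computation. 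The secondary technical point, ensuring the Taylor remainder is genuinely $o(3^{-n})$ uniformly on the shrinking cells and does not destroy the interval, is routine given continuity of $\partial_x f,\partial_y f$ on the open set $U$.
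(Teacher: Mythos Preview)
Your overall plan is sound, and your ``crux'' --- that $\lambda C + C = [0,\lambda+1]$ for $\lambda \in (1,3)$, proved by checking that the four first-level pieces overlap --- is correct and is essentially the same computation the paper performs. The gap is in the final step, where you pass from the linearization $L$ back to $f$. You assert that since $L$ maps $(I_n\cap C)\times(J_n\cap C)$ onto an interval of length $\sim 3^{-n}$ and $\|f-L\|_\infty = o(3^{-n})$ on $I_n\times J_n$, the image of $f$ on that set must also contain an interval. This does not follow: the domain $(I_n\cap C)\times(J_n\cap C)$ is totally disconnected, so there is no intermediate-value argument available, and a $C^0$-small perturbation of a map on a Cantor set can in principle acquire gaps in its image. (The middle-third Cantor set has Newhouse thickness exactly $1$, so the gap lemma gives no robustness margin here.) Calling this step ``routine'' skips over the actual content of the theorem.

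The fix is to use the $C^1$ information that continuity of $\partial_x f,\partial_y f$ actually gives you, rather than mere $C^0$ closeness to $L$: on a small enough basic square $E$ around $(x_0,y_0)$ both partials keep a fixed sign and the ratio $\partial_x f/\partial_y f$ stays inside a fixed compact subinterval of $(1,3)$. With this in hand, run your four-pieces-overlap computation not for the linearization but for $f$ itself, and not at one fixed scale but at \emph{every} sub-basic-square $I\times J\subset E$: the overlap inequalities (e.g.\ $f(a_1+t,b_1+t)>f(a_1,b_2)$) follow from the mean value theorem together with the derivative bounds valid throughout $E$. This yields $f(\widetilde I,\widetilde J)=f(I,J)$ at every level, and a nested-intersection/compactness argument then gives $f((C\times C)\cap E)=f(E)$, an honest interval. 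This is precisely the paper's route (its Lemma~1 and Proposition~1): the same overlap calculation as yours, but applied directly to $f$ at all scales rather than to the linearization at one scale followed by an unjustified perturbation.
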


\begin{corollary}
\label{Cor} Let $C$ be the middle-third Cantor set. If $f(x,y)$ is one of
the following functions,
\begin{equation*}
x^{\alpha }y^{\beta }(\alpha \beta \neq 0),\text{ }x^{\alpha }\pm y^{\alpha
}(\alpha \neq 0),\text{ }x\pm y^{2},\text{ }\sin (x)\cos (y),
\end{equation*}%
then $f_{U}(C,C)$ contains a non-empty interior.
\end{corollary}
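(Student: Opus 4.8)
The plan is to deduce the corollary directly from Theorem~\ref{Main}. For each listed $f$ I will name an open set $U$ on which $f,\partial_x f,\partial_y f$ are continuous, together with a point $(x_0,y_0)\in(C\times C)\cap U$ at which
\[
\rho:=\left\vert\frac{\partial_x f|_{(x_0,y_0)}}{\partial_y f|_{(x_0,y_0)}}\right\vert\in\left(\tfrac13,3\right)\setminus\{1\};
\]
then $\rho\in(1,3)$ gives the first alternative in Theorem~\ref{Main} and $\rho\in(\tfrac13,1)$ the second, so $f_U(C,C)$ has a non-empty interior. Whenever a non-integer exponent is present I take $U=(0,\infty)^2$, and otherwise $U=\mathbb{R}^2$. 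The one genuinely exceptional input is $x^\alpha\pm y^\alpha$ with $\alpha=1$, i.e.\ $f=x\pm y$: here $\partial_x f/\partial_y f\equiv\pm1$, so Theorem~\ref{Main} says nothing, but $C+C=[0,2]$ and $C-C=[-1,1]$ (recalled in the Introduction), and I dispose of this case that way. From now on $\alpha\neq1$ in that family.

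The only fact about $C$ I expect to need is elementary. Every bounded connected component $(g_1,g_2)$ of $\mathbb{R}\setminus C$ is the open middle third of a parent interval lying inside $[0,1]$, so the left endpoint $g_1-(g_2-g_1)$ of that parent is $\ge0$, i.e.\ $g_2\le 2g_1$ (in particular $g_1>0$). Hence, whenever $0<u<1$ and $v>2u$, the interval $(u,v)$ meets $C$: otherwise $(u,v)$ sits inside a single complementary component, which by $0<u<1$ is neither $(-\infty,0)$ nor $(1,\infty)$, hence is a bounded gap $(g_1,g_2)$ with $g_1\le u$, forcing $v\le g_2\le 2g_1\le 2u$, a contradiction. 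I will also use that $3^{-n}\in C$ and $1-3^{-n}\in C$ for every $n\ge0$, which is immediate from their ternary expansions (digits $0$ and $2$ only).

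Then I run through the four families. For $f=x^\alpha y^\beta$ a short computation gives $\rho=\tfrac{|\alpha|}{|\beta|}\cdot\tfrac{y}{x}$ on $(0,\infty)^2$; with $r:=|\beta/\alpha|$ I choose $x_0=3^{-n}\in C$ for $n$ large enough that $r x_0<1$, note that $(r x_0,3r x_0)$ then satisfies the hypotheses of the lemma above (left endpoint $<1$, right endpoint three times the left), hence contains some necessarily positive $y_0\in C$, and get $\rho=\tfrac1r\cdot\tfrac{y_0}{x_0}\in(1,3)$. For $f=x^\alpha\pm y^\alpha$ with $\alpha\neq0,1$ one finds $\rho=(x/y)^{\alpha-1}$ on $(0,\infty)^2$; taking $x_0=1$ and $y_0=1-3^{-n}$, the ratio $x_0/y_0=(1-3^{-n})^{-1}$ is $>1$ and tends to $1$ as $n\to\infty$, so $\rho=(1-3^{-n})^{-(\alpha-1)}$ tends to $1$ while staying $\neq1$ (as $\alpha-1\neq0$), and any sufficiently large $n$ works. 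For $f=x\pm y^2$ one has $\rho=\tfrac1{2y}$, so $x_0=0,\ y_0=\tfrac13$ gives $\rho=\tfrac32\in(1,3)$. For $f=\sin x\cos y$ one has $\rho=\cot x\cot y$ whenever $x,y\in(0,\tfrac\pi2)$, in particular on $(0,1]^2$, so $x_0=y_0=\tfrac23$ gives $\rho=\cot^2(\tfrac23)\approx1.6\in(1,3)$ (and $\neq1$ since $\tfrac23\neq\tfrac\pi4$).

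The hard part here is really nothing more than the little ``interval of endpoint-ratio at least $2$ meets $C$'' lemma, together with, in the product case, the device of first scaling the target interval by a small Cantor factor so that its left endpoint drops below $1$; after that each verification is a single line. The points that still need a moment of care are: keeping $U$ inside the open quadrant the instant an exponent fails to be a positive integer, so the partials are genuinely defined and continuous there; choosing the sample point so that $\rho\neq1$ (a mild perturbation, as in the $x^\alpha\pm y^\alpha$ family and in the choices $\tfrac13\neq\tfrac12$, $\tfrac23\neq\tfrac\pi4$); and disposing of the excluded value $\alpha=1$ in $x^\alpha\pm y^\alpha$ outside Theorem~\ref{Main}, using the classical identities for $C\pm C$.
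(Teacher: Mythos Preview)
Your proof is correct and follows the same overall strategy as the paper: for each listed $f$, exhibit a point of $(C\times C)\cap U$ where the ratio of partials lies strictly between $1$ and $3$ (or between $1/3$ and $1$), then invoke Theorem~\ref{Main}; the exceptional case $\alpha=1$ in $x^\alpha\pm y^\alpha$ is handled identically via $C\pm C$. Your choices for $x\pm y^2$, $x^\alpha\pm y^\alpha$, and $\sin x\cos y$ match the paper's almost verbatim.

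The one place you genuinely diverge is the product case $x^\alpha y^\beta$. The paper first reduces to $f(x,y)=xy^\gamma$ with $|\gamma|\ge 1$ and then splits into two subcases according to whether $|\gamma|$ is an exact power of $3$, picking $x=(2/3)\cdot 3^{-k}$ or $x=3^{-(k+1)}$ accordingly. You instead prove a clean structural fact---every bounded gap $(g_1,g_2)$ of $C$ satisfies $g_2\le 2g_1$, so any open interval of the form $(u,3u)$ with $0<u<1$ meets $C$---and use it to locate $y_0\in C\cap(rx_0,3rx_0)$ in one stroke, with no case analysis on $|\beta/\alpha|$. This is a neater and more uniform argument; the paper's version is more explicit but ad hoc.

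One small quibble: your rule ``$U=(0,\infty)^2$ when a non-integer exponent is present, otherwise $U=\mathbb{R}^2$'' fails for negative integer exponents (e.g.\ $f=x^{-1}y$ is not even defined on the line $x=0$). Since your chosen points always lie in $(0,\infty)^2$ anyway, simply taking $U=(0,\infty)^2$ for all the power functions fixes this with no further change.
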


This paper is arranged as follows. In Section 2, we give a proof of Theorem %
\ref{Main}. In Section 3, we give some remarks.

\bigskip

\section{Proof of Theorem \protect\ref{Main}}

The middle-third Cantor set can be generated by an iterated function system,
i.e. $C$ is the unique non-empty compact set satisfying the equation:
\begin{equation*}
C=f_{1}(C)\cup f_{2}(C),
\end{equation*}%
where $f_{1}(x)=\dfrac{x}{3},f_{2}(x)=\dfrac{x+2}{3}$, see \cite{Hutchinson}%
. Given $J=[a,b],$ let
\begin{equation*}
\widetilde{J}=[a,a+\frac{b-a}{3}]\cup \lbrack b-\frac{b-a}{3},b].
\end{equation*}%
Let $H=[0,1]$. For any $(i_{1},\cdots ,i_{n})\in \{1,2\}^{n}$, we call $%
f_{i_{1},\cdots ,i_{n}}(H)=(f_{i_{1}}\circ \cdots \circ f_{i_{n}})(H)$ a
basic interval of rank $n,$ which has length $3^{-n}$.  We say that $I\times
J$ is a basic square of $C\times C$, if $I$ and $J$ are basic intervals of
the same rank. Denote by $H_{n}$ the collection of all these basic intervals
of rank $n$. Let $J\in H_{n}$, then $\widetilde{J}=\cup _{i=1}^{2}I_{n+1,i}$%
, where $I_{n+1,i}\in H_{n+1}$ and $I_{n+1,i}\subset J$ for $i=1,2$. Let $%
[A,B]\subset \lbrack 0,1]$, where $A$ and $B$ are the left and right
endpoints of some basic intervals in $H_{k}$ for some $k\geq 1$,
respectively. $A$ and $B$ may not be in the same basic interval. Let $F_{k}$
be the collection of all the basic intervals in $[A,B]$ with length $%
3^{-k},k\geq k_{0}$ for some $k_{0}\in \mathbb{N}^{+}$, i.e. the union of
all the elements of $F_{k}$ is denoted by $G_{k}=\cup _{i=1}^{t_{k}}I_{k,i}$%
, where $t_{k}\in \mathbb{N}^{+}$, $I_{k,i}\in H_{k}$ and $I_{k,i}\subset
\lbrack A,B]$. Clearly, by the definition of $G_{n}$, it follows that $%
G_{n+1}\subset G_{n}$ for any $n\geq k_{0}.$ Similarly, suppose that $M$ and
$N$ are the left and right endpoints of some basic intervals in $H_{k}$.
Denote by $G_{k}^{\prime }$ the union of all the basic intervals with length
$3^{-k}$ in the interval $[M,N]$, i.e. $G_{k}^{\prime }=\cup
_{j=1}^{t_{k}^{\prime }}J_{k,j}$, where $t_{k}^{\prime }\in \mathbb{N}^{+}$,
$J_{k,j}\in H_{k}$ and $J_{k,j}\subset \lbrack M,N]$.

The following Lemma \ref{key1} comes from \cite{Tyson} and \cite{XiKan1},
here we give its proof just for the self-containedness of the paper.

\begin{lemma}
\label{key1} Let $F:U\rightarrow \mathbb{R}$ be a continuous function$.$
Suppose $A$ and $B$ ($M$ and $N$) are the left and right endpoints of some
basic intervals in $H_{k_{0}}$ for some $k_{0}\geq 1$ respectively such that
$[A,B]\times \lbrack M,N]\subset U.$ Then $C\cap \lbrack A,B]=\cap _{n={k_{0}%
}}^{\infty }G_{n}$, and $C\cap \lbrack M,N]=\cap _{n={k_{0}}}^{\infty
}G_{n}^{\prime }$. Moreover, if for any $n\geq k_{0}$ and any two basic
intervals $I\subset G_{n}$, $J\subset G_{n}^{\prime }$ such that
\begin{equation*}
F(I,J)=F(\widetilde{I},\widetilde{J}),
\end{equation*}%
then $F(C\cap \lbrack A,B],C\cap \lbrack M,N])=F(G_{k_{0}},G_{k_{0}}^{\prime
}).$
\end{lemma}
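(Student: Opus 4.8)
The plan is to prove Lemma~\ref{key1} in two halves. The first claim, that $C\cap[A,B]=\cap_{n\geq k_0}G_n$ (and likewise $C\cap[M,N]=\cap_{n\geq k_0}G_n'$), is essentially a bookkeeping identity about the standard construction of $C$. First I would recall that $C=\cap_{n\geq 0}C_n$ where $C_n$ is the union of all $2^n$ basic intervals of rank $n$, i.e. $C_n=\cup_{I\in H_n}I$. Since $A$ and $B$ are endpoints of basic intervals in $H_{k_0}$, the set $G_n$ (for $n\geq k_0$) is exactly the union of those rank-$n$ basic intervals that lie inside $[A,B]$; this is the same as $C_n\cap[A,B]$ up to the finitely many rank-$n$ intervals that straddle $A$ or $B$ --- but because $A,B$ are themselves endpoints of rank-$k_0$ (hence rank-$n$) intervals, no rank-$n$ interval straddles them, so in fact $G_n=C_n\cap[A,B]$. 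Intersecting over $n\geq k_0$ and using that $C_n\supset C_{n+1}$ gives $\cap_{n\geq k_0}G_n=\cap_{n\geq k_0}(C_n\cap[A,B])=C\cap[A,B]$, using $C\subset C_{k_0}$. The argument for $[M,N]$ is identical.

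For the second, substantive claim, the plan is a clean induction on $n$ using the hypothesis $F(I,J)=F(\widetilde I,\widetilde J)$ for every pair of basic intervals $I\subset G_n$, $J\subset G_n'$. The key observation is that $G_{n+1}=\cup_{I\in F_n}\widetilde I$: indeed, refining a rank-$n$ basic interval $I$ one step replaces it by its two rank-$(n+1)$ children, which is exactly $\widetilde I$, and the rank-$(n+1)$ basic intervals contained in $[A,B]$ are precisely the children of the rank-$n$ ones contained in $[A,B]$ (again using that $A,B$ are endpoints so nothing is lost at the boundary). Hence
\begin{equation*}
F(G_{n+1},G_{n+1}')=\bigcup_{I\in F_n,\,J\in F_n'}F(\widetilde I,\widetilde J)=\bigcup_{I\in F_n,\,J\in F_n'}F(I,J)=F(G_n,G_n'),
\end{equation*}
where the middle equality is exactly the hypothesis applied termwise. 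By induction $F(G_n,G_n')=F(G_{k_0},G_{k_0}')$ for all $n\geq k_0$. It then remains to pass to the limit: since $C\cap[A,B]=\cap_n G_n$ and $C\cap[M,N]=\cap_n G_n'$, one has $F(C\cap[A,B],C\cap[M,N])\subset\cap_n F(G_n,G_n')=F(G_{k_0},G_{k_0}')$. For the reverse inclusion, take $z\in F(G_{k_0},G_{k_0}')$; by the step above $z\in F(G_n,G_n')$ for every $n$, so there are $x_n\in G_n\cap[A,B]$, $y_n\in G_n'\cap[M,N]$ with $F(x_n,y_n)=z$. By compactness of $[A,B]\times[M,N]$ pass to a convergent subsequence $(x_{n_k},y_{n_k})\to(x,y)$; since the $G_n$ are nested and closed, $x\in\cap_n G_n=C\cap[A,B]$ and similarly $y\in C\cap[M,N]$, and continuity of $F$ gives $F(x,y)=z$. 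Hence $F(C\cap[A,B],C\cap[M,N])=F(G_{k_0},G_{k_0}')$.

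The main obstacle, such as it is, lies entirely in the first paragraph's boundary bookkeeping: one must be careful that $G_n$ is genuinely $C_n\cap[A,B]$ with no partial intervals at the ends, which is where the hypothesis that $A$ and $B$ are endpoints of basic intervals (rather than arbitrary points) is used, and one must check the analogous fact $G_{n+1}=\cup_{I\in F_n}\widetilde I$ that drives the induction. Everything after that --- the induction itself, and the compactness-plus-continuity limiting argument --- is routine. I would therefore spend most of the write-up nailing down these two combinatorial identities precisely and then present the induction and the limit argument briskly.
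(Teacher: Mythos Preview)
Your proposal is correct and follows essentially the same approach as the paper: use the nesting $G_{n+1}\subset G_n$ to identify $C\cap[A,B]$ with $\cap_n G_n$, show $F(G_n,G_n')=F(G_{n+1},G_{n+1}')$ via the hypothesis and the identity $G_{n+1}=\widetilde{G_n}$, and then pass to the limit by continuity. Your write-up is in fact more careful than the paper's, which simply asserts $F(\cap_n G_n,\cap_n G_n')=\cap_n F(G_n,G_n')$ from continuity without spelling out the compactness-and-subsequence argument you provide for the nontrivial inclusion.
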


\begin{remark}
The lemma is also valid when we replace $C\cap \lbrack A,B]$ (or $C\cap
\lbrack M,N])$ by $(-C)\cap \lbrack A,B]$ (or $(-C)\cap \lbrack M,N])\ $due
to the symmetry of $C.$
\end{remark}

\begin{proof}
By the construction of $G_{n}$ ($G_{n}^{\prime }$), i.e. $G_{n+1}\subset
G_{n}$ ($G_{n+1}^{\prime }\subset G_{n}^{\prime }$) for any $n\geq k_{0}$,
it follows that
\begin{equation*}
C\cap \lbrack A,B]=\cap _{n=k_{0}}^{\infty }G_{n}\text{ and }C\cap \lbrack
M,N]=\cap _{n=k_{0}}^{\infty }G_{n}^{\prime }.
\end{equation*}%
The continuity of $F$ yields that
\begin{equation*}
F(C\cap \lbrack A,B],C\cap \lbrack M,N])=\cap _{n=k_{0}}^{\infty
}F(G_{n},G_{n}^{\prime }).
\end{equation*}%
In terms of the relation $G_{n+1}=\widetilde{G_{n}}$, $G_{n+1}^{\prime }=%
\widetilde{G_{n}^{\prime }}$ and the condition in the lemma, it follows that
\begin{eqnarray*}
F(G_{n},G_{n}^{\prime }) &=&\cup _{1\leq i\leq t_{n}}\cup _{1\leq j\leq
t_{n}^{\prime }}F(I_{n,i},J_{n,j}) \\
&=&\cup _{1\leq i\leq t_{n}}\cup _{1\leq j\leq t_{n}^{\prime }}F(\widetilde{%
I_{n,i}},\widetilde{J_{n,j}}) \\
&=&F(\cup _{1\leq i\leq t_{n}}\widetilde{I_{n,i}},\cup _{1\leq j\leq
t_{n}^{\prime }}\widetilde{J_{n,j}}) \\
&=&F(G_{n+1},G_{n+1}^{\prime }).
\end{eqnarray*}%
Therefore, $F(C\cap \lbrack A,B],C\cap \lbrack
M,N])=F(G_{k_{0}},G_{k_{0}}^{\prime }).$
\end{proof}

\begin{proposition}
\label{key3} Assume that $f:U\rightarrow \mathbb{R}$ is a function such that
$\partial _{x}f$, $\partial _{y}f$ are continuous on $U.$ If there is a
point $(x_{0},y_{0})\in (C\times C)\cap U$ such that
\begin{equation*}
\left\{
\begin{array}{l}
\partial _{x}f|_{(x_{0},y_{0})},\partial _{y}f|_{(x_{0},y_{0})}>0, \\
\partial _{y}f|_{(x_{0},y_{0})}<\partial _{x}f|_{(x_{0},y_{0})}<3\partial
_{y}f|_{(x_{0},y_{0})},%
\end{array}%
\right.
\end{equation*}%
then $f_{U}(C,C)$ contains a non-empty interior.
\end{proposition}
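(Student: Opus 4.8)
The plan is to use Lemma~\ref{key1} to reduce the statement to a purely combinatorial/geometric claim about a single pair of basic squares. Near the point $(x_0,y_0)$ the hypotheses say $\partial_x f$ and $\partial_y f$ are positive and comparable, with $1<\partial_x f/\partial_y f<3$; by continuity there is a small neighborhood on which these inequalities persist (with slightly worse but still strict constants, say $1+\delta<\partial_x f/\partial_y f<3-\delta$ and both partials bounded between positive constants). Inside this neighborhood I would pick, by density of endpoints of basic intervals in $C$, a large rank $k_0$ and basic intervals $I_0=[A,B]\in H_{k_0}$, $J_0=[M,N]\in H_{k_0}$ with $x_0\in[A,B]$, $y_0\in[M,N]$ and $[A,B]\times[M,N]\subset U$ contained in that neighborhood. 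The first (main) task is then to verify the hypothesis of Lemma~\ref{key1}: for every $n\ge k_0$ and every pair of basic intervals $I\subset G_n$, $J\subset G'_n$ (which here are just $I_0,J_0$ refined), one has $f(I,J)=f(\widetilde I,\widetilde J)$.

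The heart of the argument is this local claim: if $I=[a,a+3^{-n}]$ and $J=[c,c+3^{-n}]$ are basic intervals of the same rank $n$ inside our neighborhood, then $f(I\times J)=f(\widetilde I\times\widetilde J)$, where $\widetilde I=I_\ell\cup I_r$ is $I$ with its middle third removed, similarly $\widetilde J=J_\ell\cup J_r$. Since $f(\widetilde I\times\widetilde J)\subset f(I\times J)$ trivially, I only need the reverse inclusion: every value $f(x,y)$ with $(x,y)\in I\times J$ is attained with $(x,y)\in\widetilde I\times\widetilde J$. The mechanism is that, because $\partial_x f\approx\partial_y f$ in size (ratio in $(1,3)$), moving from a point in the removed middle third of $I$ can be compensated by moving in $J$, and vice versa, staying inside $\widetilde I\times\widetilde J$; the condition ratio $<3$ is exactly what makes the ``gain'' in one coordinate large enough to cover the gap of length $3^{-(n+1)}$ left by deleting a middle third, while the ratio $>1$ handles the symmetric case. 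Concretely, along a level set of $f$ through any point of $I\times J$, the implicit slope $dy/dx=-\partial_x f/\partial_y f$ has absolute value in $(1,3)$, so following the level curve one reaches $\widetilde I\times\widetilde J$ before leaving $I\times J$; making this precise with the intermediate value theorem and the quantitative bounds on the partials over the (small) square $I\times J$ is the routine-but-careful part.

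Having established the local claim for every basic square in the neighborhood, Lemma~\ref{key1} gives
\[
f\big(C\cap[A,B],\,C\cap[M,N]\big)=f\big(G_{k_0},G'_{k_0}\big)=f\big([A,B]\times[M,N]\big),
\]
since $G_{k_0}=[A,B]$ and $G'_{k_0}=[M,N]$ are full intervals at rank $k_0$. The right-hand side is the continuous image of a square with a non-vanishing gradient, hence contains an open interval (e.g. by the intermediate value theorem applied to $t\mapsto f$ along a segment on which $\partial_x f\neq 0$, or simply because $f$ is an open map near $(x_0,y_0)$). Therefore $f_U(C,C)\supset f(C\cap[A,B],C\cap[M,N])$ contains a non-empty interior, which proves Proposition~\ref{key3}.

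I expect the main obstacle to be the quantitative bookkeeping in the local claim: one must control the oscillation of $\partial_x f/\partial_y f$ over the square $I\times J$ and check that, for the worst-case placement of the point $(x,y)$ inside the deleted middle-third cross, the compensating displacement in the other coordinate genuinely lands in $\widetilde I\times\widetilde J$ and not merely in $I\times J$. Shrinking the initial neighborhood (equivalently, taking $k_0$ large) so that the partials are nearly constant on each basic square is what buys the needed uniformity, and the strict inequalities $1<\cdot<3$ (rather than $\le$) give the slack to absorb this oscillation.
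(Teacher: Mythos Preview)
Your proposal is correct and follows the same strategy as the paper: reduce via Lemma~\ref{key1} to proving $f(I,J)=f(\widetilde I,\widetilde J)$ for every basic square $I\times J$ inside a small basic square around $(x_0,y_0)$, then conclude that $f_U(C,C)$ contains the full image of that square.

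The only difference is in how the local claim is executed. Where you sketch a level-curve argument, the paper gives a more explicit computation that avoids any case analysis: since $\partial_x f,\partial_y f>0$ on the small square, $f(I,J)=[f(a_1,b_1),f(a_1+3t,b_1+3t)]$ and each $f(I_i,J_j)=[f(a_i,b_j),f(a_i+t,b_j+t)]$ are intervals; one then checks via first-order Taylor that the four sub-intervals chain in the order
\[
f(I_1,J_1),\quad f(I_1,J_2),\quad f(I_2,J_1),\quad f(I_2,J_2)
\]
to cover $[f(a_1,b_1),f(a_2+t,b_2+t)]=f(I,J)$. The overlap $f(I_1,J_2)\cap f(I_2,J_1)\neq\emptyset$ is exactly where $\partial_x f<3\partial_y f$ is used, and the other two overlaps are where $\partial_x f>\partial_y f$ is used. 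This is the same phenomenon your level-curve picture encodes, just packaged so that the ``routine-but-careful'' part becomes three one-line inequalities.
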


\begin{proof}
Let $E$\ be a basic square\ of $C\times C$ containing $(x_{0},y_{0})$ such
that the diameter of $E$ is small enough. By Lemma \ref{key1}, it suffices
to show that for any basic square $I\times J=[a_{1},a_{1}+3t]\times \lbrack
a_{1},a_{1}+3t]\subset E,$ we have
\begin{equation*}
f(\widetilde{I},\widetilde{J})=\bigcup%
\limits_{i,j=1}^{2}f(I_{i},J_{j})=f(I,J),
\end{equation*}%
where $\widetilde{I}=I_{1}\cup I_{2}=[a_{1},a_{1}+t]\cup \lbrack
a_{2},a_{2}+t]$ and $\widetilde{J}=J_{1}\cup J_{2}=[b_{1},b_{1}+t]\cup
\lbrack b_{2},b_{2}+t].$

\begin{figure}[tbph]
\centering\includegraphics[width=0.35\textwidth]{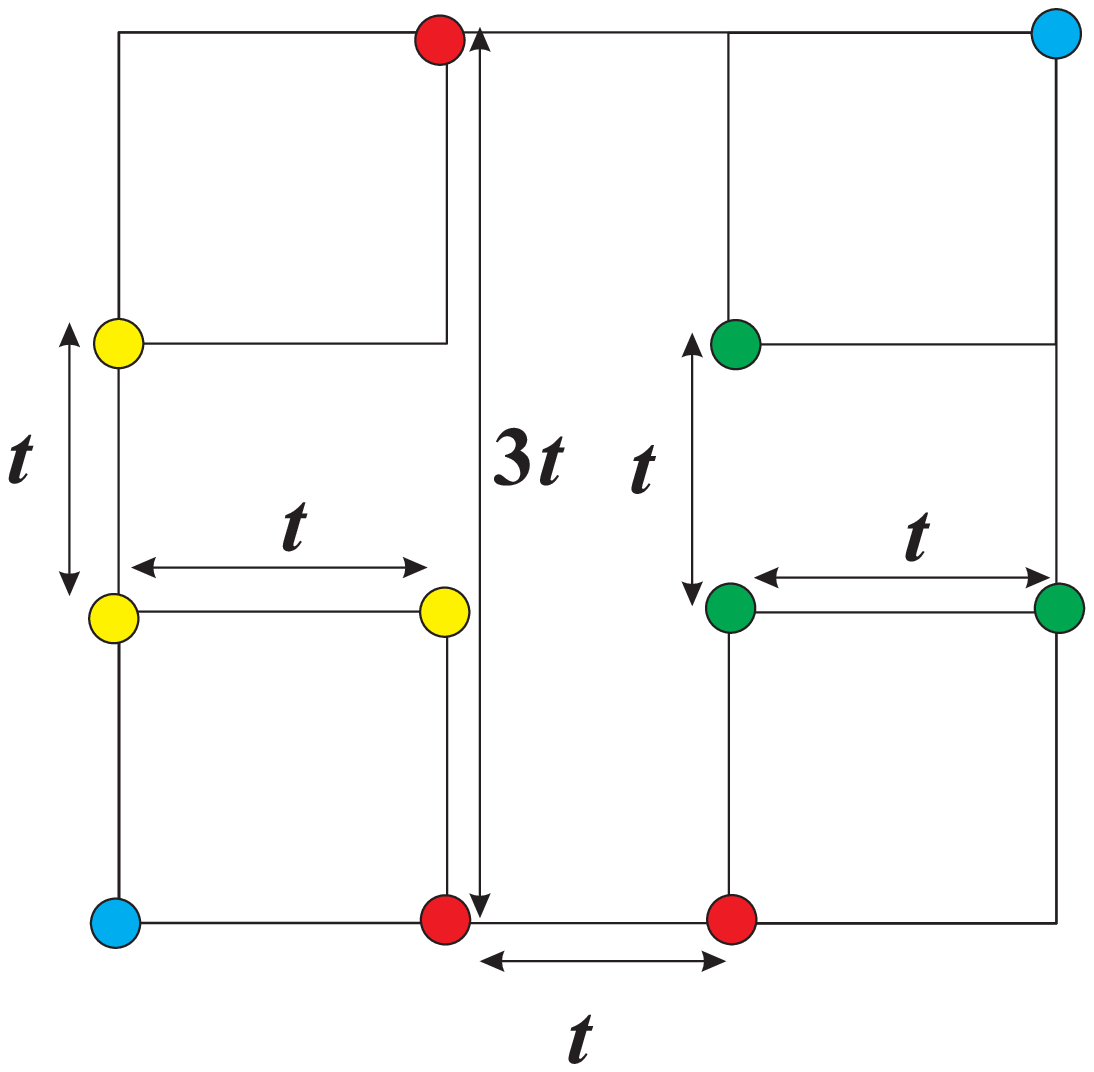} \vspace{-0.2cm}
\caption{{}}
\end{figure}

Note that $f$ is differentiable since $\partial _{x}f$, $\partial _{y}f$ are
continuous. Since $t$ is small enough, when $(x,y)\rightarrow (x_{0},y_{0}),$
by the above conditions we obtain that
\begin{eqnarray*}
H_{1}(x,y) &=&f(x+3t,y+3t)-f(x,y)=3(\partial _{x}f+\partial _{y}f)t+o(t)>0,
\\
H_{2}(x,y) &=&f(x+t,y+t)-f(x,y)=(\partial _{x}f+\partial _{y}f)t+o(t)>0, \\
H_{3}(x,y) &=&f(x,y+3t)-f(x+t,y)=(3\partial _{y}f-\partial _{x}f)t+o(t)>0, \\
H_{4}(x,y) &=&f(x+t,y)-f(x,y+t)=(\partial _{x}f-\partial _{y}f)t+o(t)>0,
\end{eqnarray*}%
where $o(t)/t\rightarrow 0$ uniformly as $t\rightarrow 0$, i.e. $o(t)$ is
independent of the choice of $(x,y)$ as $\partial _{x}f$ and $\partial _{y}f$
are continuous. Using $\partial _{x}f,\partial _{y}f>0$, $H_{1}(x,y)>0$ and $%
H_{2}(x,y)>0$ we have
\begin{equation*}
f(I,J)=[f(a_{1},b_{1}),f(a_{1}+3t,b_{1}+3t)],
\end{equation*}%
\begin{equation*}
f(I_{i},J_{j})=[f(a_{i},b_{j}),f(a_{i}+t,b_{j}+t)]\text{ for all }i,j,
\end{equation*}%
and
\begin{eqnarray*}
f(I_{1},J_{1})\cap f(I_{1},J_{2}) &\neq &\emptyset \text{ since }%
f(a_{1}+t,b_{1}+t)-f(a_{1},b_{2})=H_{4}(a_{1},b_{1}+t)>0, \\
f(I_{1},J_{2})\cap f(I_{2},J_{1}) &\neq &\emptyset \text{ since }%
f(a_{1}+t,b_{2}+t)-f(a_{2},b_{1})=H_{3}(a_{1}+t,b_{1})>0, \\
f(I_{2},J_{1})\cap f(I_{2},J_{2}) &\neq &\emptyset \text{ since }%
f(a_{2}+t,b_{1}+t)-f(a_{2},b_{2})=H_{4}(a_{2},b_{1}+t)>0.
\end{eqnarray*}%
Therefore we obtain $f(\widetilde{I},\widetilde{J})=f(I,J).$ The proposition
follows from Lemma \ref{key1}.
\end{proof}

\medskip

\begin{proof}[\textbf{Proof of Theorem 1}]
$\ $

(1) Case 1: Suppose $\partial _{x}f|_{(x_{0},y_{0})}>0$ and $\partial
_{y}f|_{(x_{0},y_{0})}>0.$ If
\begin{equation*}
\partial _{x}f|_{(x_{0},y_{0})}<\partial _{y}f|_{(x_{0},y_{0})}<3\partial
_{x}f|_{(x_{0},y_{0})},
\end{equation*}
we replace $f(x,y)$ by $g_{1}(x,y)=f(y,x),$ then Theorem 1 follows from
Proposition \ref{key3} in this case.

(2) Case 2: Suppose $\partial _{x}f|_{(x_{0},y_{0})}<0$ and $\partial
_{y}f|_{(x_{0},y_{0})}<0.$ In this case, we can replace $f(x,y)$ by $%
g_{2}(x,y)=-f(x,y)$ or $g_{3}(x,y)=-f(y,x).$

(3) Case 3: Suppose $\partial _{x}f|_{(x_{0},y_{0})}<0$ and $\partial
_{y}f|_{(x_{0},y_{0})}>0.$ In this case, we can replace $f(x,y)$ by $%
g_{4}(x,y)=f(-x,y),$ we obtain $\partial _{x}g_{4}|_{(x_{0},y_{0})}>0$ and $%
\partial _{y}g_{4}|_{(x_{0},y_{0})}>0.$ By the symmetry of $C$ and $(-C),$
applying Lemma \ref{key1} to $(-C)\times C,$ Theorem \ref{Main} follows from
Proposition \ref{key3}.

(4) Case 4: Suppose $\partial _{x}f|_{(x_{0},y_{0})}>0$ and $\partial
_{y}f|_{(x_{0},y_{0})}<0.$ We can replace $f(x,y)$ by $g_{5}(x,y)=f(x,-y)$
in this case$.$
\end{proof}

\medskip

\begin{proof}[\textbf{Proof of Corollary \protect\ref{Cor}}]
It suffices to check the conditions in Theorem \ref{Main}.

(1) If $f(x,y)=x^{\alpha }y^{\beta }$ with $\alpha \beta \neq 0$, using $%
(x^{\alpha }y^{\beta })=(xy^{\beta /\alpha })^{\alpha },$ we only need to
deal with $f(x,y)=xy^{\gamma }.$ Using the symmetry, we may assume that $%
|\gamma |\geq 1.$ Now we have%
\begin{equation*}
\partial _{x}f=y^{\gamma }\text{ and }\partial _{y}f=\gamma xy^{\gamma -1}
\end{equation*}%
with $\left\vert \frac{\partial _{x}f}{\partial _{y}f}\right\vert =\frac{1}{%
|\gamma |}\left\vert \frac{y}{x}\right\vert .$ If $|\gamma |=3^{k}$ for some
integer $k\geq 0,$ we take $y=1$ and $x=(2/3)\cdot 3^{-k},$ hence $%
\left\vert \frac{\partial _{x}f}{\partial _{y}f}\right\vert =3/2\in (1,3)$
in this case$.$ Otherwise, if $3^{k}<|\gamma |<3^{k+1}\ $for some integer $%
k\geq 0,$ then we take $y=1$ and $x=3^{-(k+1)},$ then $\left\vert \frac{%
\partial _{x}f}{\partial _{y}f}\right\vert \in (1,3).$ Now, $f_{U}(C,C)$
contains a non-empty interior for $f(x,y)=x^{\alpha }y^{\beta }$ with $%
\alpha \beta \neq 0$.

(2) If $f(x,y)=x^{\alpha }\pm y^{\alpha }$ with $\alpha \neq 0$, then%
\begin{equation*}
|\partial _{x}f|=|\alpha |x^{\alpha -1}\text{ and }|\partial _{y}f|=|\alpha
|y^{\alpha -1}
\end{equation*}%
with $\left\vert \frac{\partial _{x}f}{\partial _{y}f}\right\vert
=\left\vert \frac{x^{\alpha -1}}{y^{\alpha -1}}\right\vert .$ When $\alpha
\neq 1,$ take $x,y\in C$ such that $y/x$ is close to $1$ enough$,$ then $%
1<\left\vert \frac{\partial _{x}f}{\partial _{y}f}\right\vert <3$ or $%
1<\left\vert \frac{\partial _{y}f}{\partial _{x}f}\right\vert <3.$ When $%
\alpha =1,$ the classical result $C+C=[0,2]\ $implies there is a non-empty
interior in $f(C,C)=C+C.$

(3) If $f(x,y)=x\pm y^{2}$, then $\partial _{x}f=1,|\partial _{y}f|=2y$.
Take $x_{0}=8/9,y_{0}=1/3$, which implies $1<|1/(2y_{0})|<3$.

(4) If $f(x,y)=\sin (x)\cos (y),$ then
\begin{equation*}
|\partial _{x}f|=|\cos x\cos y|,|\partial _{y}f|=|\sin x\sin y|.
\end{equation*}%
We take $(x_{0},y_{0})=(2/3,2/3),$ we obtain that
\begin{equation*}
|\cos (2/3)\cos (2/3)|=0.6176\cdots ,\text{ }|\sin (2/3)\sin
(2/3)|=0.3823\cdots ,
\end{equation*}%
and thus $1<\left\vert \frac{\cos (2/3)\cos (2/3)}{\sin (2/3)\sin (2/3)}%
\right\vert =1.615\cdots <3.$
\end{proof}

\bigskip

\section{Final remarks}

Our idea can be implemented for some overlapping self-similar sets.
Moreover, in Theorem \ref{Main}, for some functions, we can obtain that $%
f_{U}(C,C)$ contains infinitely many closed intervals.

\bigskip


\end{document}